\documentclass{amsart}%
\usepackage{amsmath}
\usepackage{amsfonts}
\usepackage{amssymb}
\usepackage{graphicx}%

\usepackage[utf8]{inputenx}  
\usepackage[T1]{fontenc}  

\usepackage{mathscinet}

\DeclareMathOperator{\dom}{dom\!}
\DeclareMathOperator{\cod}{cod\!}
\DeclareMathOperator{\supp}{supp\!}

\newtheorem{theorem}{Theorem}

\newtheorem{lemma}[theorem]{Lemma}

\newtheorem{proposition}[theorem]{Proposition}

\title[No P-points]{An easy proof that there may be no P-points}                                                   

\author{David Chodounský}
\address{Institute of Mathematics of the Czech Academy of Sciences,
Žitná~25, Praha~1, Czech Republic.
}
\email{chodounsky@math.cas.cz}
\thanks{The first author was supported by project 26-22502S of the Czech Science Foundation (GAČR) and the Czech Academy of Sciences CAS (RVO 67985840).}

\author{Osvaldo Guzm\'{a}n}
\address{Centro de Ciencias Matem\'{a}ticas, UNAM.
}
\email{oguzman@matmor.unam.mx}
\thanks{The second author was supported by the PAPIIT
grant IA 104124 and the SECIHTI grant CBF2023-2024-903.}

\author{Jonathan Verner}
\address{HID Global Corporation, Thamova 183/11, 186 00 Prague, Czech Republic.
}
\email{jonathan.verner@matfyz.cz}
\thanks{The third author would like to thank the OZP for keeping him alive for a number of years.}

\keywords{P-points, Silver forcing, Tukey top ultrafilters}
\subjclass[2010]{03E35, 54D40, 54A35}

\begin{document}

\begin{abstract}
\noindent
We provide an easy proof that there are no P-points in the models obtained by adding many Silver reals with countable support to a model of~\textsf{CH}.
\end{abstract}

\maketitle

\section{Introduction}

\noindent
The existence of ultrafilters on natural numbers with special
combinatorial properties is a major topic in set theory and set-theoretic topology. 
Ultrafilters with special combinatorial properties are often hard to construct, and more often than not, their existence is independent from the axioms of~\textsf{ZFC.}
Kunen started this trend in~\cite{KunenSomePoints}, where he proved that there are no Ramsey ultrafilters in the random model. 
Not long after Shelah found a model with no P-points 
(see~\cite{ProperandImproper, Wimmers}) 
and Miller showed that there are no Q-points in the Laver model and the Miller model, see~\cite{NoQpointsMiller, Rat}. 
Since then many independence theorems
regarding the non-existence of special types of ultrafilters have been proved, see~\cite{Brendlenowheredense, UltraSurvey, AboveFsigma, IsbellProblem, TherearenoPpointsinSilverExtensions,PFiltersCohenRandomLaver, OrderedUnionvsSelective,    ShelahNowheredense}
among many others.


The class of P-points is the one that has received the most attention.
This is because of their usefulness, nice combinatorial properties, and good behavior with respect to forcing iteration. 
Shelah's construction of a model without P-points as presented in~\cite{ProperandImproper} and \cite{Barty} 
(which is not the same as in \cite{Wimmers}) proceeds as follows:
for each \textsf{P}-point $\mathcal{U}$, a forcing $\mathbb{P}^{\omega}(\mathcal{U})$ is defined. 
This forcing is proper, $\omega^{\omega}$-bounding and
\emph{kills} $\mathcal{U}$; the ultrafilter $\mathcal{U}$ cannot be extended to a \textsf{P}-point in the generic extension obtained by forcing with $\mathbb{P}^{\omega}(\mathcal{U})$.
Moreover, this remains true in any further extension by a proper, $\omega^{\omega}$-bounding forcing. 
A model without P-points is then
obtained by iterating forcings of type $\mathbb{P}^{\omega}(\mathcal{U})$, 
killing each \textsf{P}-point one at a time.


In~\cite{TherearenoPpointsinSilverExtensions} it was proved that \emph{the Silver forcing} (denoted by $\mathbb{SL}$) kills all ultrafilters in a similar way, 
although here the preservation property only holds for forcings with the Sacks property. 
While $\mathbb{P}^{\omega}(\mathcal{U})$ only kills $\mathcal{U}$,
the Silver forcing kills every ultrafilter in the ground model.
More precisely, $\mathbb{SL}$ can be decomposed as an iteration of first adding a generic selective ultrafilter $\mathcal{S}$, followed by forcing with $\mathbb{P}(\mathcal{S})$, which is a variant of Shelah's poset $\mathbb{P}^{\omega}(\mathcal{S})$. 
The forcing $\mathbb{P(\mathcal{S})}$ kills all P-points that are not near coherent with~$\mathcal{S}$, in particular all P-points of the ground model. 

It follows from a reflection argument that there are no P-points in the Silver model; the model obtained by adding $\omega_{2}$ Silver reals to a model of \textsf{CH}.
This is essentially the only known model without P-points that is obtained with a definable forcing. Furthermore, the argument from~\cite{TherearenoPpointsinSilverExtensions} also shows that there are no P-points in models obtained by forcing with countable support products of Silver forcing, not just iterations. 
In particular, the non-existence of P-points is consistent with the continuum being arbitrarily large. 
The purpose of this note is to present a new proof of this result, which is much easier than the one presented in~\cite{TherearenoPpointsinSilverExtensions}. 
Moreover, in our opinion, this is the simplest known proof of non-existence of P-points in any model, 
regardless of whether the continuum is $\omega_{2}$ or larger.


\section{Notation and Preliminaries}

\noindent
All ultrafilters are assumed to be non-principal. For $\mathcal{P} \subseteq {\left[  \omega\right]} ^{\omega}$ and $A\subseteq\omega$, we say that
$A$ is \emph{a pseudointersection} of $\mathcal{P}$ if it is almost contained in all elements of $\mathcal{P}$, 
i.e.\ $A \setminus P$ is finite for every $P \in \mathcal P$. 
An ultrafilter $\mathcal{U}$ on $\omega$ is
a \emph{P-point} if every countable subfamily of $\mathcal{U}$ has a pseudointersection in $\mathcal{U}$. 
Let $A$ and $B$ be two sets. 
The expression $f;A\to B$ denotes that $f$ is a partial function from $A$ to $B$. 
For $p;\omega\to2$ the domain of $p$ is denoted by
$\dom\left(  p\right)$, and $\cod\left(  p\right)$ is $\omega\setminus \dom \left(  p\right)$.
For convenience, we will write $p^{-1}\left(  1\right)$ instead of $p^{-1}\left[  \left\{  1\right\} \right]$. o
The \emph{Silver forcing} (denoted by $\mathbb{SL}$) 
consists of all partial functions $p;\omega\to2$ such that $\cod\left(  p\right)$ is infinite.
The order $q\leq p$ is simply $p\subseteq q$. 
If $G\subseteq\mathbb{SL}$ is a generic filter, the \emph{Silver generic real} is defined as $r_{\mathrm{gen}} = \bigcup G$, which is a total function from
$\omega$ to $2$. 

It is well-known that Silver forcing is proper and
$\omega^{\omega}$-bounding 
(this means that every function in $\omega^{\omega}$ in the generic extension is pointwise bounded by a ground model function). 
It is easy to see that the set of all $p\in\mathbb{SL}$ such that $p^{-1}\left( 1\right)$ is infinite is open dense. 
{From now on we assume that every element of} $\mathbb{SL}$ {has this property.} 
For more on Silver forcing, the reader may consult~\cite{Combinatorial2}. 
For a set of ordinals $X$ we denote by $\mathbb{SL}^{X}$ the set of all functions $p;X\to\mathbb{SL}$ with finite or countably infinite domain, 
which we denote by $\supp \left( p\right)$. 
Define $q\leq p$ if $\supp\left( p\right)  \subseteq \supp \left( q\right)$ and $q\left( \alpha\right) \leq p\left( \alpha\right)$ for all $\alpha\in \supp \left( p\right)$. 
It is not hard to prove that $\mathbb{SL}^{X}$ is an $\omega^{\omega}$-bounding proper forcing 
and \textsf{CH} implies that it has the $\omega_{2}$ chain condition, see~\cite{BaumgartnerAlmostDisjoint, Formalismclassofforcings}.

For two conditions $q$ and $r$ in a forcing $\mathbb{P},$ by $q\parallel r$ we
denote that $q$ and $r$ are compatible.

\section{Two Folklore results \label{Two}}

\noindent
Let $\kappa$ be an uncountable cardinal.
We say that $\mathcal{S}\subseteq {\left[\kappa\right]}^{\omega}$ 
is a \emph{stick family for} $\kappa$ if
every $A\in{\left[\kappa\right]}^{\kappa}$ contains an element of $\mathcal{S}$. 
For applications and results on stick families, 
see e.g.~\cite{BaumgartnerAlmostDisjoint, Stick, BarelyIndependent}. 
We are concerned when the ground model family ${\left[  \kappa\right]}^{\omega}$ remains a
stick family in a generic extension. The following result is easy to prove.

\begin{proposition}[\textsf{CH}]\label{Stick family}
Let $\kappa$ be a cardinal. $\mathbb{SL}^{\kappa}$ forces that
$V\cap {\left[\omega_{2}\right]}^{\omega}$ is a stick family for $\omega_{2}$.
\end{proposition}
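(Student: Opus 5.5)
We plan to use the standard $\omega_2$-chain condition plus a $\Delta$-system/pigeonhole style argument, working in $V$ with a name. Fix $p\in\mathbb{SL}^{\kappa}$ and a name $\dot A$ with $p\Vdash \dot A\in[\omega_2]^{\omega_2}$. For each $\alpha<\omega_2$ we ask whether some extension of $p$ forces $\alpha\in\dot A$. Let $B=\{\alpha<\omega_2 : \exists q\le p\ (q\Vdash\alpha\in\dot A)\}$. Since $p$ forces $\dot A\subseteq B$ and $\dot A$ has size $\omega_2$, we get $|B|=\omega_2$. For each $\alpha\in B$ we choose $q_\alpha\le p$ with $q_\alpha\Vdash \alpha\in\dot A$.

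The goal is to find a countably infinite set $C\subseteq B$ in $V$ and a condition $r\le p$ that is below (or compatible with) all the $q_\alpha$, $\alpha\in C$, simultaneously. Then $r\Vdash C\subseteq\dot A$, and since $C\in V\cap[\omega_2]^\omega$, density finishes the proof.

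To get such $C$ we thin out the family $\{q_\alpha:\alpha\in B\}$ using \textsf{CH}.
\begin{itemize}
\item By the $\Delta$-system lemma for countable sets (valid under \textsf{CH} for $\omega_2$ many), we pass to $B_1\in[B]^{\omega_2}$ such that the supports $\supp(q_\alpha)$ form a $\Delta$-system with root $R$, where $R$ is countable.
\item Each restriction $q_\alpha\restriction R$ is a countable function into $\mathbb{SL}$, and there are only $\mathfrak c^{\omega}=\omega_1$ of them. So we pass to $B_2\in[B_1]^{\omega_2}$ on which $q_\alpha\restriction R$ is the same function $s$.
\end{itemize}
Now pick any countable $C\subseteq B_2$ and define $r$ as follows:
\begin{itemize}
\item $r$ agrees with $s$ on $R$;
\item on each $\supp(q_\alpha)\setminus R$ with $\alpha\in C$, $r$ agrees with $q_\alpha$;
\item on $\supp(p)$ outside all these sets, $r$ agrees with $p$.
\end{itemize}
The tails $\supp(q_\alpha)\setminus R$ are pairwise disjoint, so this is a well-defined function with countable support. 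We have $r\le q_\alpha$ for every $\alpha\in C$, and $r\le p$ because each $q_\alpha\le p$. This is a genuine condition, since each coordinate value is a Silver condition taken from one of the $q_\alpha$ or from $s$.

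The main obstacle is the thinning step, specifically justifying the $\Delta$-system lemma for $\omega_2$ many countable sets and the count of possible restrictions to $R$; both use \textsf{CH} ($\omega_1^{\omega}=\omega_1<\omega_2$). One subtlety is that the $q_\alpha$ may be required to be chosen so that they agree on the root. This is exactly what the second pigeonhole achieves, because each element of $\mathbb{SL}$ is a real, so there are only $\mathfrak c=\omega_1$ possible values per coordinate. Note that the $\omega_2$-c.c. is not even needed; the argument is purely combinatorial, and $\kappa$ may be arbitrary.
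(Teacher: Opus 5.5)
Your proposal is correct and follows essentially the same route as the paper: choose $q_\alpha\le p$ forcing $\alpha\in\dot A$ for $\omega_2$ many $\alpha$, apply the $\Delta$-system lemma under \textsf{CH} to the supports, pigeonhole the restrictions to the root $R$ (at most $\omega_1$ possibilities), and take the union of countably many of the resulting conditions. Two small remarks: your third clause defining $r$ is vacuous, since $\supp(p)\subseteq\bigcap_{\alpha}\supp(q_\alpha)=R$, so your $r$ is just $\bigcup_{\alpha\in C}q_\alpha$, exactly the paper's $q$; and you should take $C$ countably infinite, as you stated at the outset.
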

\begin{proof}
Let $p\in\mathbb{SL}^{\kappa}$ and $\dot{B}$ be such that $p\Vdash \dot{B}\in\left[\omega_{2}\right]^{\omega_{2}}$. 
Find $\left\{\left(p_{\alpha},\beta_{\alpha}\right)
\mid\alpha\in\omega_{2}\right\}$ such that for every $\alpha, \gamma\in \omega_{2}$
\begin{enumerate}
\item $p_{\alpha}\leq p$,
\item $p_{\alpha}\Vdash \beta_{\alpha}\in\dot{B}$,
\item $\beta_{\alpha}\neq\beta_{\gamma}$ whenever $\alpha\neq\gamma$.
\end{enumerate}

Since we assume \textsf{CH}, 
we can apply the $\Delta$-system lemma
(Lemma~III.6.15 of~\cite{Kunen}) to $\{\supp\left(  p_{\alpha}\right)  \mid\alpha\in\omega_{2}\}\subseteq\left[  \kappa\right]^{{\leq}\omega}$ 
and find $W\in{\left[\omega_{2}\right]}^{\omega_{2}}$ and
$R\in{\left[\kappa\right]}^{{\leq}\omega}$ 
such that $\{\supp\left(p_{\alpha}\right)  \mid\alpha\in W\}$ is a $\Delta$-system with root
$R$. 
Since $\mathbb{SL}^{R}$ has size at most $\omega_{1}$ 
(exactly $\omega_{1}$ in case $R\neq\emptyset$), 
there is $Y\in{\left[W\right]}^{\omega_{2}}$ such that $p_{\alpha}\restriction R = p_{\gamma}\restriction R$ for all $\alpha,\gamma\in Y$. 
Let $X\subseteq Y$ be countable and define $q = \bigcup\limits_{\alpha\in X} p_{\alpha}$. 
It follows that $q\Vdash \left\{\beta_{\alpha}\mid\alpha\in
X\right\}  \subseteq\dot{B}$.
\end{proof}

In fact, a stronger
statement holds; 
Baumgartner proved in~\cite{BaumgartnerAlmostDisjoint} that $\mathbb{SL}^{\kappa}$ forces that
$V\cap{\left[\omega_{1}\right]}^{\omega}$ is a stick family for $\omega_{1}$. 
We will use the version for $\omega_{2}$ since its proof is
considerably simpler than the one for $\omega_{1}$ and sufficient for our purposes.

As in~\cite{Kunen}, $V^{\mathbb{P}}$ denotes the class of all $\mathbb{P}$-names for a partial order $\mathbb{P}$.
Let $\kappa$ be a cardinal,
$\mathcal{\dot{U}}$ an $\mathbb{SL}^{\kappa}$-name for an ultrafilter on $\omega$, and $X\subseteq\kappa$. 
The \emph{restriction of} $\mathcal{\dot{U}}$
\emph{to} $\mathbb{SL}^{X}$, 
denoted $\mathcal{\dot{U}}_{X}$, is the name for the family:
\[
\{\dot{A}\left[ G_{X}\right]  \mid\dot{A}\in V^{\mathbb{SL}^{X}},\,\exists p\in G_{X}(p\Vdash \dot{A}\in\mathcal{\dot{U}})\}
\]
where $\dot{G}_{X}$ is the canonical name for the
generic filter on $\mathbb{SL}^{X}$. 
We will say that $X$ is $\mathcal{\dot{U}}$\emph{-suitable} if $\mathbb{SL}^{X}$ forces that $\mathcal{\dot{U}}_{X}$
is an ultrafilter. The following may be considered folklore:

\begin{lemma}[\textsf{CH}]\label{Suitable}
Let $\kappa$ be a cardinal, $p\in\mathbb{SL}^{\kappa}$ and
$\mathcal{\dot{U}}$ an $\mathbb{SL}^{\kappa}$-name for an ultrafilter on $\omega$. 
There is $X\in\left[  \kappa\right]  ^{\leq\omega_{1}}$ that is $\mathcal{\dot{U}}$-suitable. 
\end{lemma}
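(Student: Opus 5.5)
The plan is a closure argument of length $\omega_1$. The key point is that $\mathbb{SL}^{X}$ is a complete suborder of $\mathbb{SL}^{\kappa}$: a condition restricts to its support intersected with $X$, and any $q \in \mathbb{SL}^{X}$ below that restriction is compatible with the original condition. Hence $G_X = G\cap \mathbb{SL}^{X}$ is $\mathbb{SL}^{X}$-generic, and $\mathcal{\dot U}_X$ is always forced to be a filter (in fact a subfamily of $\mathcal{\dot U}$). The only thing to secure is maximality. Concretely, for every $\mathbb{SL}^{X}$-name $\dot A$ for a subset of $\omega$, the set
\[
D_{\dot A}=\{q\in\mathbb{SL}^{X} \mid \exists \dot B\in V^{\mathbb{SL}^{X}}\,(q\Vdash_{\mathbb{SL}^{\kappa}} \dot B\in\mathcal{\dot U}\ \text{and}\ q\Vdash_{\mathbb{SL}^X} (\dot B=\dot A \vee \dot B=\omega\setminus\dot A))\}
\]
must be dense in $\mathbb{SL}^{X}$.

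We can reduce to nice names. By properness and since $\mathbb{SL}^X$ is $\omega^\omega$-bounding (or simply by ccc-free counting using countable antichains being insufficient), we restrict to names of the form $\dot A$ given by a countable sequence of maximal antichains. Using properness, every subset of $\omega$ in $V^{\mathbb{SL}^X}$ is, below any condition, forced equal to a name decided by countably many countable antichains below some stronger condition. Under \textsf{CH}, if $|X|\le\omega_1$ then $|\mathbb{SL}^{X}|\le\omega_1$, so there are at most $\omega_1^{\omega}=\omega_1$ such countable names. (Only names built from countable subsets of conditions are needed, by properness: for any name $\dot A$ and condition $r$ there are $r'\le r$ and a countable set $C$ of conditions such that $r'$ forces $\dot A$ equal to a name built from $C$.)

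Next comes the closure step. Given countable data $C\subseteq \mathbb{SL}^{X}$ describing a name $\dot A$ and a condition $r\in\mathbb{SL}^X$, the condition $r$ (viewed in $\mathbb{SL}^{\kappa}$) has an extension $s\in\mathbb{SL}^{\kappa}$ deciding whether $\dot A\in\mathcal{\dot U}$. The set of conditions deciding this is dense in $\mathbb{SL}^{\kappa}$. Record such an $s$, whose support is countable. Now build $X_0\subseteq X_1\subseteq\dots$ for $\xi<\omega_1$, with $|X_\xi|\le\omega_1$ and unions at limits. At stage $\xi+1$, add to $X_\xi$ the supports of the witnesses $s$ chosen for each of the $\omega_1$ many pairs ($r$, countable name) over $\mathbb{SL}^{X_\xi}$. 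Take $X=\bigcup_{\xi<\omega_1}X_\xi$; then $|X|\le\omega_1$. Any countable name over $\mathbb{SL}^X$ and any condition in $\mathbb{SL}^X$ both have countable supports, so they already lie in some $\mathbb{SL}^{X_\xi}$, by regularity of $\omega_1$. The witness $s$ therefore lies in $\mathbb{SL}^{X_{\xi+1}}\subseteq\mathbb{SL}^X$. Thus $s\in\mathbb{SL}^X$, $s\le r$, and $s$ forces over $\mathbb{SL}^\kappa$ that $\dot A$ or its complement is in $\mathcal{\dot U}$, which shows $D_{\dot A}$ is dense. Maximality follows, so $X$ is $\mathcal{\dot U}$-suitable. (The condition $p$ plays no role beyond possibly being put into $X_0$ via its support.)

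The main obstacle I expect is the name-counting step: justifying that only $\omega_1$ names matter. If that step is awkward, I would instead use an elementary submodel. Take $M\prec H(\theta)$ of size $\omega_1$, closed under $\omega$-sequences (possible by \textsf{CH}), containing $\kappa$ and $\mathcal{\dot U}$, and set $X=M\cap\kappa$. Properness lets us take any $\mathbb{SL}^X$-name for a real and, below a given condition, find a countable name for it, which lies in $M$ by $\omega$-closure. Elementarity then supplies deciding conditions inside $M$, whose supports are contained in $X$.
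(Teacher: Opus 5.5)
Your proof is correct and is essentially the paper's argument. The $\omega_1$-stage closure of $X$ under supports of deciding witnesses for all pairs (condition, countable name) is an unpacked version of taking $X=M_{\omega_1}\cap\kappa$ for the $\omega$-closed model $M_{\omega_1}$ of size $\omega_1$. It uses the same three ingredients: properness to replace an arbitrary $\mathbb{SL}^X$-name below a condition by one built from countably many conditions, \textsf{CH} to count these as $\omega_1$ many, and regularity of $\omega_1$ to catch each one at some stage. Your fallback paragraph is exactly the paper's proof.

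The only blemish is the sentence invoking $\omega^\omega$-bounding and ``ccc-free counting,'' which should be deleted. Full nice names number $2^{\omega_1}$, too many to close under, and it is the properness reduction in the following sentence that brings the count down to $\omega_1$.
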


\begin{proof}
Construct $\left\{M_{\alpha}\mid\alpha\leq\omega_{1}\right\}$ 
a continuous increasing chain of elementary submodels of $H(\lambda)$ 
(for some large enough $\lambda$) with the following properties:
\begin{enumerate}
\item $\mathcal{\dot{U}},\kappa\in M_{0}$.
\item If $\alpha<\omega_{1}$, then $M_{\alpha}$ is countable.
\item ${\left[ M_{\omega_{1}}\right]}^{\omega}\subseteq M_{\omega_{1}}$.
\end{enumerate}
This is easy to do using \textsf{CH}.
Let $X=M_{\omega_{1}}\cap\kappa$.
Note that $\mathbb{SL}^{X}\subseteq M_{\omega_{1}}$ 
and every countable subset of
$\mathbb{SL}^{X}$ is also in $M_{\omega_{1}}$. 

We will now prove that $X$ is $\mathcal{\dot{U}}$-suitable. 
To see this let $p\in\mathbb{SL}^{X}$ and $\dot{B}$ be an $\mathbb{SL}^{X}$-name for a subset of $\omega$. 
For every $n\in\omega$ find a maximal antichain of conditions $A_{n}$ in $M_{\omega_1}$
such that every element of $A_{n}$ decides if $n$ is an element of $\dot{B}$. 
Since $\mathbb{SL}^{X}$ is proper, 
there is $q \in \mathbb{SL}^{X}$, $q\leq p$ such that each 
$q\parallel A_{n}=\left\{r\in A_{n}\mid q\parallel r\right\}$
is countable.
By the previous remarks, we can find
$\alpha<\omega_{1}$ such that $q$ and $\left\{q\parallel A_{n}\mid n\in \omega\right\}$ belong to $M_{\alpha}$. 
In particular, $q$ forces that $\dot{B}$ is equivalent to an $\mathbb{SL}^{X}$-name in $M_{\alpha}$, 
so any $(M_{\alpha}, \mathbb{SL}^{X})$-generic filter containing $q$ decides if
$\dot{B}$ is in $\mathcal{\dot{U}}$.
\end{proof}

For our proof in the next section
it is only sufficient to know that for every ground model set, 
the membership in $\mathcal{\dot{U}}$ is decided in 
$V\left[G_{X}\right]$, a fact which is even easier to prove.

\section{No more P-points}
\label{No more Ppoints}

\noindent
This section contains our main result: Adding at least 
$\omega_{2}$ Silver reals with countable support over a model of \textsf{CH}
provides a model with no P-points.

For every partial function $r;$ $\omega\to 2$ with $r^{-1}\left( 1\right)$ infinite 
define the interval partition $P\left(  r\right)  =\left\{  P_{n}\left( r\right)  \mid n\in \omega\right\}$
where $P_{n}(r) = \left\{\ell \in \omega \mid \left| \ell \cap r^{-1}(1) \right| =n\right\}$. 
Interval partitions induced by Silver generic reals are particularly interesting and will be fundamental for our main theorem. 
Fix a regular cardinal $\kappa>\omega_{1}$.
For $\alpha<\kappa$ and $i<2$ denote by $\dot{r}_{\alpha}$ 
the name of the $\alpha$-generic real added by
$\mathbb{SL}^{\kappa}$, 
and $\dot{D}_{i}\left(\alpha\right)$ is the
$\mathbb{SL}^{\kappa}$-name of $%
{\bigcup} \left\{ P_{2m+i}\left(  \dot{r}_{\alpha}\right)  \mid m\in\omega\right\}$.

\begin{proposition}[\textsf{CH}]\label{No-pseudoint}
Let $\mathcal{\dot{U}}$ be an $\mathbb{SL}^{\kappa}$-name for a nonprincipal ultrafilter on $\omega$, 
$X\in{\left[ \kappa\right]}^{\leq\omega_{1}}$ an
$\mathcal{\dot{U}}$-suitable set, $B\in\left[  \kappa\setminus X\right]^{\omega}$, and $i\in2$.
If $G\subseteq\mathbb{SL}^{\kappa}$ is a generic filter, 
then every pseudointersection of $\{\dot{D}_{i}\left(  \alpha\right) \mid\alpha\in B\}$ is disjoint with some element of $\mathcal{\dot{U}}_{X}\left[  G\right]$. 
\end{proposition}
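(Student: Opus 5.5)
The plan is to work in the intermediate model and reduce the statement to a fusion argument over the countable set $B$. Since $\mathbb{SL}^{\kappa}\cong\mathbb{SL}^{X}\times\mathbb{SL}^{\kappa\setminus X}$, we write $G=G_X\times H$. By suitability of $X$, $\mathcal{W}=\mathcal{\dot{U}}_X[G_X]$ is a nonprincipal ultrafilter in $V[G_X]$. Moreover, $H$ is $\mathbb{SL}^{\kappa\setminus X}$-generic over $V[G_X]$, and the reals $r_\alpha$ for $\alpha\in B$ are Silver generic over $V[G_X]$. So it suffices to prove the following in $V[G_X]$. Let $p\in\mathbb{SL}^{\kappa\setminus X}$ and let $\dot C$ be a name forced by $p$ to be an infinite pseudointersection of $\{\dot D_i(\alpha)\mid\alpha\in B\}$. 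Then there are $q\le p$ and $U\in\mathcal{W}$ such that $q\Vdash |\dot C\cap U|<\omega$. Removing the finitely many points from $U$ then gives an element of $\mathcal{W}$ disjoint from $C$, since $\mathcal{W}$ is nonprincipal.

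First I would use that $\mathbb{SL}^{\kappa\setminus X}$ is $\omega^\omega$-bounding to tame the pseudointersection. Enumerate $B=\{\alpha_n\mid n\in\omega\}$ and let $\dot e(n)=\max(\dot C\setminus \dot D_i(\alpha_n))$. Extending $p$, we may assume there is $g\in\omega^\omega\cap V[G_X]$ with $p\Vdash \dot e(n)< g(n)$ for all $n$. Hence, above $g(n)$, the set $C$ is contained in $\bigcap_{m\le n} D_i(\alpha_m)$. It is therefore enough to find $q\le p$ and $U\in\mathcal{W}$ together with an increasing sequence $k_0<k_1<\cdots$, $k_n\ge g(n)$, such that for each $n$,
\[
q\Vdash U\cap[k_n,k_{n+1})\cap\bigcap_{m\le n}\dot D_i(\alpha_m)=\emptyset .
\]
This makes $C\cap U\subseteq k_0$.

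The construction is a fusion on the coordinates $\alpha_n$, done inside $V[G_X]$. The basic observation concerns a single Silver condition $s$ with $\cod(s)=\{c_0<c_1<\dots\}$. For $\ell$ between $c_j$ and $c_{j+1}$, membership $\ell\in D_i(r)$ for $r\supseteq s$ depends only on the parity of the number of $1$'s that $r$ assigns to $\{c_0,\dots,c_j\}$. So once the bits below $c_j$ are fixed, the interval $(c_j,c_{j+1})$ is split by $s$ into two ground-model pieces, one of which lies in $D_i(r)$. Which piece it is flips according to the bit at $c_j$.

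At stage $n$ I would do the following. Keep an infinite set of coding points free in every coordinate already handled, freeze finitely many more, and group the remaining intervals into blocks. Each block has two complementary pieces, and which of them meets $D_i(\alpha_n)$ is decided by a fresh coding bit of $q(\alpha_n)$. The ultrafilter $\mathcal{W}$ then chooses, via a single partition of $\omega$ into two ground sets, which ``colour'' $U$ should follow. We set the corresponding bits of $\alpha_n$ so that $D_i(\alpha_n)$ lands in the other colour on the blocks beyond $g(n)$ that are reserved for stage $n$. The set $U$ is the intersection of these decisions. Because at stage $n$ we only need $\bigcap_{m\le n}D_i(\alpha_m)$ to avoid $U$, it suffices that one coordinate $\alpha_m$ ($m\le n$) is frozen correctly on each block. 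This allows infinitely many coding points to remain free in each coordinate, so the fusion limit is a legitimate condition in $\mathbb{SL}^{\kappa\setminus X}$.

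The main obstacle is this last step: arranging the blocks and the ultrafilter choices so that $U$ is one element of $\mathcal{W}$ while every coordinate $q(\alpha_n)$ still has infinite codomain. A single $\alpha$ cannot work on its own, since by genericity $D_i(\alpha)$ meets every infinite ground set. What I would try first is to partition $\omega$ into the blocks $J_k=[k_k,k_{k+1})$ and let $\mathcal{W}$ decide once for each $n$ whether $\bigcup_{k\equiv n}J_k$-type sets are used. Alternatively, let $\mathcal{W}$ decide the two-colouring given by the frozen parities of all coordinates simultaneously. In both versions, $U$ is determined by finitely many binary decisions per block, each of which is a ground set in $V[G_X]$.
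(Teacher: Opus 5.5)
Your reduction to $V[G_X]$, where $\mathcal{W}=\mathcal{\dot{U}}_X[G_X]$ is an honest ultrafilter, is sound. So are the use of $\omega^\omega$-bounding to get $g$ and the observation that one free coding bit decides which of two complementary ground-model pieces of an interval lies in $D_i$. But the proposal has a genuine gap: it stops at the step that carries the proof, and the route you sketch for that step does not work.

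If $\mathcal{W}$ makes a separate choice at each stage $n$ and $U$ is the intersection of these choices, $U$ need not be in $\mathcal{W}$, since a nonprincipal ultrafilter is not closed under countable intersections. If instead $U$ is the union of the traces of the chosen sets on the blocks reserved for each stage, membership in $\mathcal{W}$ is a P-point-type property. You cannot assume that of $\mathcal{W}$ when the aim is to show that $\mathcal{\dot{U}}$ is not a P-point.

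There is also a definability problem. Over $V[G_X]$ the forcing is the ground-model poset $\mathbb{SL}^{\kappa\setminus X}$; the $r_\alpha$ are generic for $V$'s Silver poset, not that of $V[G_X]$. So $q$ must be an element of $V$, and a condition built from infinitely many choices made by $\mathcal{W}$ in $V[G_X]$ generally is not. ``Finitely many binary decisions per block'' is therefore the wrong count: you need finitely many decisions in total, all made before any coding bit is fixed.

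The missing idea, which is the paper's, needs no fusion.
\begin{enumerate}
\item Choose $g\in V$ and consecutive intervals $J_n=[k_n,k_{n+1})$ with $k_n>g(n)$, so that $C\cap J_n\subseteq D_i(\alpha_n)$.
\item Extend every $p(\alpha_n)$, independently and all at once, so that its codomain below $k_{n+1}$ is a single point $a_n<k_n$ of $\cod(p(\alpha_n))$. The codomain above $k_{n+1}$ stays infinite.
\item Now $D_i(\alpha_n)\cap J_n$ is a fixed ground-model set $H_n$ if the bit at $a_n$ is $0$, and $J_n\setminus H_n$ if it is $1$.
\item Let $\mathcal{W}$ decide the single set $H=\bigcup_n H_n$, and only then give every $a_n$ the same bit.
\end{enumerate}
If $H\notin\mathcal{W}$, set all the bits to $0$. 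Then $C\setminus k_0\subseteq H$, so $C$ misses $(\omega\setminus k_0)\setminus H\in\mathcal{W}$. If $H\in\mathcal{W}$, set all the bits to $1$, so $C\cap H=\emptyset$. The resulting $q$ is one of two conditions defined in $V$ from $p$, $g$ and one bit.

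The paper runs the same argument in $V$, with odd blocks $A_{2n+1}$ after a preliminary decision whether their union lies in $\mathcal{\dot{U}}$. It uses suitability and $B\cap X=\emptyset$ to decide membership of these ground-model sets by conditions that agree with the current one on $B$. This keeps the bits at the $a_n$ free until after the decision.
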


\begin{proof}
Let $B=\left\{\alpha_{n}\mid n\in\omega\right\}$ and $p\in\mathbb{SL}^{\kappa}$, 
we can assume that $B\subseteq\supp\left(p\right)$.
Let $\dot{Z}$ be a name of a pseudointersection of 
$\{\dot{D}_{i}\left(\alpha_{n}\right)\mid n\in\omega\}$. 
Since $\mathbb{SL}^{\kappa}$ is $\omega^{\omega}$-bounding, we may assume there is a ground
model increasing function $f \colon \omega\to\omega$ such that
$p\Vdash \dot{Z}\setminus f\left(n\right)\subseteq\dot{D}_{i}\left(\alpha_{n}\right)$ 
for all $n\in\omega$.
We now find an interval partition $\left\{
E_{n}\mid n\in\omega\cup\left\{  -1\right\}  \right\}  $ such that the
following holds:
\begin{itemize}
\item $f\left(  n\right)<\min E_{2n}  $ for $n\in\omega$.
\item $E_{2n+j}\cap \cod\left(  p\left(  \alpha_{n}\right)  \right) \neq\emptyset$ for every $n\in\omega$ and $j\in2$.
\end{itemize}

Define $U_{0}= {\bigcup} \left\{ E_{2n+1}\mid n\in\omega\right\}$, this is a ground model set. 
Recall that $X$ is $\mathcal{\dot{U}}$-suitable, so the coordinates outside 
$X$ are not needed to decide whether $U_{0}$ is in $\mathcal{\dot{U}}$. 
Since $B\cap X=\emptyset$, 
we can find $p_{1}\leq p$, $p\restriction B=p_{1}\restriction B$ such that $p_{1}$ decides if $U_{0}$ is in $\mathcal{\dot{U}}$. 
If $p_{1}\Vdash U_{0}\in\mathcal{\dot{U}}$, 
let $U=U_{0}$ and $A_{n}=E_{n}$ for $n\in\omega\cup\left\{  -1\right\}$. 
In case $p_{1}\Vdash U_{0}\notin\mathcal{\dot{U}}$, let $A_{-1}=E_{-1}\cup E_{0}$ and
$A_{n}=E_{n+1}$ for $n\in\omega$. 
Define $U=\bigcup\left\{A_{2n+1}\mid n\in\omega\right\}$. 
In either case, we have the following:
\begin{itemize}
\item $\left\{A_{n}\mid n\in\omega\cup\left\{  -1\right\}  \right\}$ is an interval partition.
\item $U=\bigcup\left\{A_{2n+1}\mid n\in\omega\right\}$ and $p_{1} \Vdash U\in\mathcal{\dot{U}}$.
\item $f\left(  n\right)  < \min A_{2n}$ for every $n\in\omega$, so, in particular 
\newline $p_{1}\Vdash \dot {Z}\cap A_{2n+1}\subseteq\dot{D}_{i}\left(\alpha_{n}\right)$. 
\item $A_{2n}\cap\cod\left( p_{1}\left(  \alpha_{n}\right) \right)  \neq\emptyset$ for every $n\in\omega$.
\end{itemize}

Let $p_{2}\leq p_{1}$ be a condition such that for every $n\in\omega$ the following holds:
\begin{itemize}
\item $A_l \cap \cod\left(p_{2}\left(\alpha_{n}\right)  \right) =\emptyset$ for every $l<2n$.
\item $\left|A_{2n} \cap\cod\left(  p_{2}\left(  \alpha_{n}\right)  \right) \right| = 1$.
\item $ A_{2n+1} \cap \cod \left(p_{2}\left(  \alpha_{n}\right)  \right)=\emptyset$.
\end{itemize}

Let $a_{n}$ be the unique element of 
$A_{2n} \cap \cod\left(p_{2}\left(\alpha_{n}\right)\right)$ 
and define the set 
\[ H_{n} =A_{2n+1}\cap
{\textstyle\bigcup} \left\{  P_{2m+i}\left(  p_{2}\left(  \alpha_{n}\right)  \right)  \mid m\in\omega\right\}.\] 
It is easy to see that for every $q\leq p_{2}$ the following hold:
\begin{itemize}
\item If $q\left(\alpha_{n}\right)\left(a_{n}\right)  =0$, 
then $q\Vdash A_{2n+1}\cap\dot{D}_{i}\left(  \alpha_{n}\right)  = H_{n}$.
\item If $q\left(  \alpha_{n}\right)  \left(  a_{n}\right)  =1$, then
$q\Vdash A_{2n+1}\setminus\dot{D}_{i}\left(  \alpha_{n}\right)
=H_{n}$, 
and so, in particular, $q\Vdash H_{n} \cap\dot{D}_{i}\left(  \alpha_{n}\right)  =\emptyset$.
\end{itemize}

Define $H= {\bigcup}\{H_n \mid n \in \omega\}$,
note that $H$ is a ground model set. 
Once again, find
$p_{3}\leq p_{2}$ that decides if $H$ is an element of $\mathcal{\dot{U}}$ and
$p_{3}\restriction B=p_{2}\restriction B$. 
We will now proceed by cases;
first, assume that $p_{3}\Vdash H\notin\mathcal{\dot{U}}$. 
Let $q$ be an extension of $p_{3}$ such that $q\left(  \alpha_{n}\right) \left(  a_{n}\right)  =0$ 
for every $n\in\omega$. 
We know that $q\Vdash A_{2n+1}\cap\dot{D}_{i}\left(  \alpha_{n}\right)  =H_{n}$
for every $n\in\omega$. 
Thus, $q$ forces the following:
\begin{multline*} 
\dot{Z}\cap U = 
\bigcup\limits_{n\in\omega}(\dot{Z}\cap A_{2n+1}) \subseteq \bigcup \limits_{n\in\omega}(\dot{D}_{i}\left(  \alpha
_{n}\right)  \cap A_{2n+1}) = \bigcup\limits_{n\in\omega}H_{n}
=H \notin \mathcal{\dot{U}}.
\end{multline*}
So, $q\Vdash \dot{Z}\cap\left(U\setminus H\right)  =\emptyset$.
This finishes the proof for this case, 
since $U\setminus H$ is forced to be in $\mathcal{\dot{U}}$. 

We are left with the case $p_{3}\Vdash H\in\mathcal{\dot{U}}$. 
Let $q$ be an extension of $p_{3}$ such that
$q\left( \alpha_{n}\right) \left(a_{n}\right)=1$ for every $n\in\omega$. 
Now $q\Vdash H_{n}\cap\dot{D}_{i}\left(  \alpha_{n}\right)  =\emptyset$ for every $n\in\omega$. 
It follows that $q$ forces the following:
\begin{multline*}
\dot{Z}\cap H = \bigcup\limits_{n\in\omega}(\dot{Z}\cap H_{n})
\subseteq \bigcup\limits_{n\in\omega}(\dot{D}_{i}\left(  \alpha_{n}\right)  \cap A_{2n+1}\cap H_{n}) = \bigcup\limits_{n\in\omega}(\dot{D}_{i}\left(\alpha_{n}\right) \cap H_{n})=\emptyset.
\end{multline*}
\end{proof}

The main result now follows easily.

\begin{theorem}[\textsf{CH}]\label{teorema}
Let $\kappa>\omega_{1}$ be a regular cardinal. 
$\mathbb{SL}^{\kappa}$ forces that there are no P-points.
\end{theorem}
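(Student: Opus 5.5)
Suppose toward a contradiction that $p\in\mathbb{SL}^{\kappa}$ forces that $\mathcal{\dot{U}}$ is a P-point. By Lemma~\ref{Suitable}, I would fix a $\mathcal{\dot{U}}$-suitable $X\in[\kappa]^{\leq\omega_1}$. Since $\kappa>\omega_1$ is regular, $\kappa\setminus X$ has size $\kappa$, and I would choose $\omega_2$ many coordinates in it, say an order-preserving injection $e\colon\omega_2\to\kappa\setminus X$.

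Let $G$ be generic with $p\in G$ and work in $V[G]$. For each $\alpha\in\kappa\setminus X$ the sets $\dot D_0(\alpha)[G]$ and $\dot D_1(\alpha)[G]$ partition $\omega$; this uses that $r_\alpha^{-1}(1)$ is infinite, so every $P_n(r_\alpha)$ is a finite interval. Hence exactly one of them lies in $\mathcal{U}=\mathcal{\dot U}[G]$. Choose $i_\alpha\in 2$ with $\dot D_{i_\alpha}(\alpha)[G]\in\mathcal U$. Some $i\in 2$ then satisfies $i_{e(\xi)}=i$ for $\omega_2$ many $\xi<\omega_2$. Let $A\in[\omega_2]^{\omega_2}$ be that set of indices, which lives in $V[G]$.

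By Proposition~\ref{Stick family}, $V\cap[\omega_2]^{\omega}$ is a stick family in $V[G]$. So there is a countably infinite $C\in V$ with $C\subseteq A$. Put $B=e[C]$. Then $B\in V$, $B\in[\kappa\setminus X]^{\omega}$, and $\{\dot D_i(\alpha)[G]\mid\alpha\in B\}$ is a countable subfamily of $\mathcal U$. Since $\mathcal U$ is a P-point, this family has a pseudointersection $Z\in\mathcal U$.

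Proposition~\ref{No-pseudoint} applies to $\mathcal{\dot U}$, $X$, $B$ and $i$. It gives some $W\in\mathcal{\dot U}_X[G]$ with $Z\cap W=\emptyset$. Every member of $\mathcal{\dot U}_X[G]$ is by definition forced into $\mathcal{\dot U}$ by a condition of $G_X\subseteq G$, so $W\in\mathcal U$. Then $Z$ and $W$ are two disjoint elements of the ultrafilter $\mathcal U$, a contradiction.

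The delicate point is that $B$ must be a ground-model set. Proposition~\ref{No-pseudoint} is stated for $B\in[\kappa\setminus X]^{\omega}$ in $V$, and its proof uses $B\subseteq\supp(p)$ for conditions. The color $i$ and the set $A$ are only defined in $V[G]$, which is exactly why the stick family result is needed. The transfer to coordinates via $e$ is harmless because $e\in V$. Regularity of $\kappa$ and $|X|\le\omega_1$ are used only to ensure that $\omega_2$ coordinates are available outside $X$.
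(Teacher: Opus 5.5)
Your proposal is correct and follows essentially the same route as the paper. You fix a $\mathcal{\dot U}$-suitable $X$, take $\omega_2$ coordinates outside $X$, pigeonhole on which $\dot D_i(\alpha)$ lies in $\mathcal{\dot U}$, use Proposition~\ref{Stick family} to get a ground-model monochromatic countable $B$, and conclude with Proposition~\ref{No-pseudoint}. The only differences are presentational: you argue inside $V[G]$ rather than with names $\dot e_\alpha$ and conditions $q\le p$, and you spell out the transfer via $e$ and the inclusion $\mathcal{\dot U}_X[G]\subseteq\mathcal{\dot U}[G]$, which the paper leaves implicit.
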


\begin{proof}
Let $\mathcal{\dot{U}}$ be an $\mathbb{SL}^{\kappa}$-name for an ultrafilter and $p\in\mathbb{SL}^{\kappa}$. 
By Proposition~\ref{Suitable} we can find an $\mathcal{\dot{U}}$-suitable set $X\in{\left[\kappa\right]}^{\omega_{1}}$. 
For every $\alpha\in\kappa$ let $\dot{e}_{\alpha}$ be an $\mathbb{SL}^{\kappa}$-name for which $p\Vdash \dot{D}_{\dot{e}_{\alpha}}\left(\alpha\right)  \in\mathcal{\dot{U}}$. 
Pick $W\in{\left[\kappa\right]}^{\omega_{2}}$ such that $W\cap X=\emptyset$. 
By extending $p$ if necessary, we may assume that there is $i\in2$ such that $p$ forces that
$\left\{\alpha\in W\mid\dot{e}_{\alpha}=i\right\}$ 
has size $\omega_{2}$.
Apply Proposition~\ref{Stick family} to find a ground model countable set $B\subseteq W$ 
and $q\leq p$ such that $q\Vdash \dot{e}_{\alpha}=i$ 
for every $\alpha\in B$. 
Proposition~\ref{No-pseudoint} entails that $q$ forces that 
$\{\dot{D}_{i}\left(\alpha\right) \mid \alpha\in B\}$ 
is a countable subset of $\mathcal{\dot{U}}$ 
that does not have a pseudointersection in $\mathcal{\dot{U}}$.
\end{proof}

\section{Towards Tukey Top}
\label{TTT}

\noindent
Let $\mathcal{U}$ be an ultrafilter on  natural numbers.
We say that
$\mathcal{U}$ is \emph{Tukey top} if there is $\mathcal{W}\in{\left[\mathcal{U}\right]}^{\mathfrak{c}}$ 
such that 
${\bigcap}\mathcal{W}_{1}\notin\mathcal{U}$ 
for every $\mathcal{W}_{1}\in{\left[\mathcal{W}\right]}^{\omega}$ 
(here $\mathfrak{c}$ denotes the cardinality
of the set of real numbers). 
It was a long standing problem of Isbell whether
\textsf{ZFC} implies the existence of a non-Tukey top ultrafilter. 
The problem was recently solved in the negative by Cancino and~Zapletal in~\cite{IsbellProblem}. 
However, it is still not known whether every ultrafilter is Tukey top in models obtained by adding Silver reals, either by iteration or countable support product. 
The proof of Theorem~\ref{teorema}
originated from our attempt to show that there are no Tukey-top ultrafilters in such models. 
Although we were ultimately unable to prove this result, 
we believe that we got close. 
Let us sketch our attempted proof.

Assume~\textsf{CH}, let $\kappa$ be a regular uncountable cardinal, and $\mathcal{\dot{U}}$ be an $\mathbb{SL}^{\kappa}$-name for an ultrafilter.
Find $X,i$ and $W$ as in the proof of Theorem~\ref{teorema},
only this time make sure $W$ has size $\kappa$. 
Proposition~\ref{No-pseudoint} implies that if
$B\in\left[  W\right]  ^{\omega}\cap V,$ then $\bigcap \{D_{i}\left(\dot{r}_{\alpha}\right) \mid {\alpha\in
B}\}$ cannot be in $\mathcal{\dot{U}}$. 
The problem is that we do not know what to do in case $B\notin V$. 
We believed that this argument can be refined to take care of all countable subsets of $W$;
this would imply that $\mathcal{\dot{U}}$ is forced to be Tukey top.

\section*{acknowledgement}
We are grateful to Stevo Todorcevic, Michael Hrušák, Juris
Stepr\={a}ns, Jonathan Cancino, and Jindřich Zapletal for insightful discussions on the topics of this work. 
We also thank the referee for her or his remarks that improved the paper.

\bibliographystyle{plain}
\bibliography{Bibliografia}

\end{document}